\documentclass{amsart}
\usepackage[dvipdfmx]{graphicx}
\usepackage{color}
\usepackage{overpic}
\usepackage{lineno}
\usepackage[all]{xy}
\usepackage{amssymb,amsmath}
\usepackage{tikz-cd}

\newtheorem{theorem}{Theorem}[section]
\newtheorem{corollary}[theorem]{Corollary}
\newtheorem{proposition}[theorem]{Proposition}
\newtheorem{lemma}[theorem]{Lemma}

\newtheorem*{thmintro}{Theorem~\ref{thm:main}}
\newtheorem*{corintro}{Corollary~\ref{cor:main}}

\theoremstyle{definition}

\newtheorem{question}[theorem]{Question}

\theoremstyle{remark}
\newtheorem{remark}[theorem]{Remark}

%% Text width
%\topmargin -0.1in
%\evensidemargin -0.1in
%\oddsidemargin -0.1in
%\textwidth 6.5in
%\textheight 9.0in
%\renewcommand\baselinestretch{1.1}

% Color commands
\definecolor{MyGreen}{rgb}{0, 0.5, 0}
\definecolor{MyYellow}{rgb}{1.0, 1.0, 0}
\definecolor{MyGray}{rgb}{0.5, 0.5, 0.5}
\definecolor{MyBlue}{rgb}{0, 0.4, 0.7}
\definecolor{MyOrange}{rgb}{1,.576470588,.117647059}
\definecolor{MyPink}{rgb}{1,0,1}
\definecolor{Crimson}{rgb}{0.7, 0.2, 0.2}
\definecolor{PG}{cmyk}{0.2, 0, 0.2, 0}
\definecolor{PM}{cmyk}{0, 0.2, 0, 0}

\newcommand{\Red}[1]{{\color{red}#1}}
\newcommand{\Blue}[1]{{\color{blue}#1}}
\newcommand{\Green}[1]{{\color{green}#1}}
\newcommand{\Gray}[1]{{\color{MyGray}#1}}

\newcommand{\Orange}[1]{{\color{MyOrange}#1}}

\newcommand{\Magenta}[1]{{\color{MyPink}#1}}

%=======================================

\def\Z{\mathbb{Z}}
\def\C{\mathbb{C}}
%=======================================

\author{Kazuhiro Ichihara}
\address{Department of Mathematics, College of Humanities and Sciences, Nihon University, 3-25-40 Sakurajosui, Setagaya-ku, Tokyo 156-8550, Japan.}
\email{ichihara@math.chs.nihon-u.ac.jp}
\thanks{Ichihara is partially supported by Grant-in-Aid for Scientific Research (C) 
(No.\ 26400100), Japan Society for the Promotion of Science.}

\author{In Dae Jong}
\address{Department of Mathematics, Kindai University, 3-4-1 Kowakae, Higashiosaka City, Osaka 577-0818, Japan}
\email{jong@math.kindai.ac.jp}

\author{Kouki Taniyama}
\address{Department of Mathematics, School of Education, Waseda University, Nishi-Waseda 1-6-1, Shinjuku-ku, Tokyo, 169-8050, Japan}
\email{taniyama@waseda.jp}
\thanks{Taniyama is partially supported by Grant-in-Aid for Challenging Exploratory Research (No.\ 15K13439) and Grant-in-Aid for Scientific Research (A) (No.\ 16H02145), Japan Society for the Promotion of Science.}

\subjclass[2010]{Primary 57M25; Secondary 57M50, 57N10}

\keywords{Amphicheiral knot, banding, achiral hyperbolic 3-manifold, chirally cosmetic filling,  chirally cosmetic surgery}

\title[Achiral hyperbolic 3-manifolds]
{Achiral 1-cusped hyperbolic 3-manifolds not coming from 
amphicheiral null-homologous knot complements}

\begin{document}

%\pagewiselinenumbers

\begin{abstract} 
It is experimentally known that achiral hyperbolic 3-manifolds are quite sporadic at least among those with small volume, 
while we can find plenty of them as amphicheiral knot complements in the 3-sphere. 
In this paper, we show that there exist infinitely many achiral 1-cusped hyperbolic 3-manifolds 
not homeomorphic to any amphicheiral null-homologous knot complement in any closed achiral 3-manifold. 
%Each of them is obtained from a chirally cosmetic banding on a certain 2-bridge link by applying the reverse of the Montesinos trick. 
\end{abstract}

\maketitle

\section{Introduction}\label{sec:intro}

An oriented $3$-manifold is said to be \emph{achiral}\footnote{The term ``amphicheiral'' 
is also used for the same notion on a 3-manifold. 
In this paper, we use the term ``achiral'' for a 3-manifold 
to clearly distinguish from an ``amphicheiral knot''.} 
if it admits an orientation-reversing self-homeomorphism. 
As pointed out in~\cite{MartelliPetronio}, it is experimentally known that achiral hyperbolic 3-manifolds are quite sporadic at least among those with small volume. 
Similarly, the achiral cusped hyperbolic 3-manifolds obtained by Dehn fillings on the ``magic manifold'' are quite rare; 
just the figure-eight knot complement, its sibling, and the complement of the two-bridge link $S(10,3)$~\cite[Proposition 2.9]{MartelliPetronio}. 
For the definition of a Dehn filling, see Section~\ref{sec:preliminary}. 

Among them, the figure-eight sibling\footnote{It is a unique complete orientable hyperbolic 3-manifold constructed by gluing together two ideal regular tetrahedra such that it is not homeomorphic to the figure-eight knot complement~\cite{Weeks}.} 
is much more special. 
%since the pair of slopes (i.e., isotopy classes of non-trivial simple closed curves \fbox{on a peripheral torus}) 
%corresponding to $1/0$ and $-1/1$ are equivalent 
%via an orientation-reversing self-homeomorphism. 
%the $1/0$- and the $-1/1$-Dehn fillings yield orientation-reversingly homeomorphic manifolds, 
%%as pointed out first by Weeks in his thetis~\cite{Weeks}, 
%that is to say, the pair of Dehn fillings are \emph{chirally cosmetic} (see Section~\ref{sec:preliminary} for the precise definitions).  
%%(with respect to some meridian-longitude system). 
%In other words, 
%Precisely, there exists an orientation-reversing self-homeomorphism $h$ 
%on the figure-eight sibling such that 
%the distance between a slope $\gamma$ and the slope $h(\gamma)$ is just one. 
Precisely, the figure-eight sibling admits an orientation-reversing self-homeomorphism $h$ 
for which there exists a slope $\gamma$ on a horotorus such that 
the distance between $\gamma$ and its image under $h$ is just one. 
Here a \emph{slope} is an isotopy class of a non-trivial simple closed curve on a torus, 
and the \emph{distance} of two slopes is defined as the minimal intersection number 
between their representatives. 

We note that such a phenomenon does not occur for any amphicheiral null-homologous knot complement. 
Here a knot $K$ in an achiral 3-manifold $M$ 
with an orientation-reversing self-homeomorphism $\varphi$ 
is said to be \emph{amphicheiral (with respect to $\varphi$)} 
if $K$ is isotopic to $\varphi(K)$ in $M$. 
For an amphicheiral null-homologous knot $K$, 
with respect to the meridian-preferred longitude system for $K$, 
the slope $p/q$ changed to $-p/q$ by the restriction of $\varphi$ 
to the complement of $K$ 
since $\varphi$ preserves the meridian-preferred longitude system for $K$. 
%\fbox{Here  $E(K) = M \setminus N(K)$ and $N(K)$} 
%\fbox{denotes an open tubular neighborhood of $K$. }
Note that the distance between the slopes $p/q$ and $-p/q$ is $2|pq|$, 
in particular, it is even. 
In addition, if $|p|, |q| \ne 0$, then the distance is greater than one. 
%It follows that no more examples can be created from amphicheiral knots in $S^3$. 
%Hence one's intuition might say that cosmetic fillings along slopes with distance one must be rare. 

In view of this, it is natural to ask if there exist achiral 3-manifolds not coming from any amphicheiral null-homologous knot complements (other than the figure-eight sibling). 
As the main result in this paper, the following ensures that such 3-manifolds surely exist. 

\begin{thmintro}
There exist infinitely many achiral 1-cusped hyperbolic 3-manifolds 
each of which is not homeomorphic to any amphicheiral null-homologous knot complement in any closed achiral 3-manifold. 
\end{thmintro}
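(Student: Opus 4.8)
The plan is to isolate a single robust criterion and then to realize it infinitely often by symmetric Dehn filling. The criterion is a rigidified form of the remark preceding the statement: if a $1$-cusped hyperbolic $3$-manifold $M$ admits an orientation-reversing self-homeomorphism $h$ together with a slope $\gamma$ on the cusp torus satisfying $\Delta(\gamma,h(\gamma))=1$, then $M$ is not homeomorphic to any amphicheiral null-homologous knot complement in any closed achiral $3$-manifold. I would first prove this criterion with care, because the one-line remark in the introduction only controls the distinguished amphicheiral involution and not the extra symmetries that a knot complement might carry; ruling those out is the genuine content.

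To prove the criterion, suppose $M\cong E(K)$ for an amphicheiral null-homologous knot $K$ in a closed achiral manifold $N$. The Seifert longitude $\lambda$ is canonical: it is the unique slope mapping into the torsion subgroup of $H_1(E(K))$, so $g_*\lambda=\pm\lambda$ for every self-homeomorphism $g$. Fixing $\lambda=(0,1)$ and any meridian completing a basis, every orientation-reversing symmetry acts by a lower-triangular involution $\left(\begin{smallmatrix} a&0\\ c&-a\end{smallmatrix}\right)$ with $a=\pm1$, and a direct computation gives $\Delta(\gamma,g_*\gamma)=|p|\,|cp-2aq|$ for $\gamma=(p,q)$; hence a distance-one orbit forces $c$ odd. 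On the other hand the amphicheiral involution $\varphi$ preserves the meridian (it preserves $K$) and the longitude, so in the true meridian-longitude basis $\varphi_*=\mathrm{diag}(-1,1)$, and changing the meridian conjugates this by $\left(\begin{smallmatrix}1&0\\k&1\end{smallmatrix}\right)$, altering the lower-left entry only by $-2k$; thus $\varphi_*$ has even lower-left entry in every framing. Then $\varphi_* h_*$ is orientation-preserving, lower-triangular with equal diagonal entries and odd lower-left entry, hence a nontrivial parabolic of infinite order; since $\mathrm{Isom}(M)$ is finite by Mostow rigidity, this is a contradiction. This establishes the criterion, and the figure-eight sibling already shows its hypothesis is nonvacuous.

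The remaining task is to produce infinitely many $1$-cusped hyperbolic manifolds satisfying this hypothesis. The plan is to start from a fixed multi-cusped hyperbolic parent $W$ carrying an orientation-reversing involution $\iota$ which (i) interchanges a pair of cusps $C_\pm$ and (ii) preserves a third cusp $C_0$, acting there as a swap-type reflection, i.e.\ with peripheral matrix conjugate to $\left(\begin{smallmatrix}0&1\\1&0\end{smallmatrix}\right)$, so that $C_0$ carries a distance-one orbit. Filling $C_\pm$ along any $\iota$-symmetric pair of slopes $\{s_n,\iota(s_n)\}$ yields a $1$-cusped manifold $M_n=W(s_n,\iota(s_n))$; because the \emph{unordered} pair of filling slopes is $\iota$-invariant, $\iota$ extends to an orientation-reversing self-homeomorphism of $M_n$ that still restricts to the swap-type reflection on $C_0$, so each $M_n$ satisfies the criterion. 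By Thurston's hyperbolic Dehn filling theorem all but finitely many $M_n$ are hyperbolic with a single cusp, and their volumes increase to $\mathrm{vol}(W)$; since only finitely many hyperbolic $3$-manifolds share a given volume, infinitely many $M_n$ are pairwise non-homeomorphic, which gives the theorem.

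The main obstacle is exhibiting an explicit parent $W$ with an involution $\iota$ realizing this precise cusp pattern — in particular checking that $\iota$ acts on the surviving cusp $C_0$ as a reflection whose two fixed slopes are at distance $2$ (equivalently, that some slope has a distance-one orbit), rather than as the diagonal reflection of a knot complement. A natural candidate is a highly symmetric chain-link complement, such as the magic manifold or a closely related link, where the swap of two components is visible; the work is to pin down the peripheral action on the third cusp and to verify that the symmetric fillings remain hyperbolic and genuinely $1$-cusped. Granting such a $W$, the criterion together with the Thurston filling count completes the proof.
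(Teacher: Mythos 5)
Your first half is solid and in fact more careful than what the paper itself records. The criterion you isolate (an orientation-reversing self-homeomorphism moving some slope distance one rules out being an amphicheiral null-homologous knot exterior) is exactly the paper's mechanism, but the paper only computes the action of the distinguished amphicheiral involution $\varphi$, which sends $p/q$ to $-p/q$ and hence moves every slope an even distance. You correctly note that an abstract homeomorphism $M\cong E(K)$ transports $h$ to \emph{some} orientation-reversing symmetry of $E(K)$, not necessarily $\varphi|_{E(K)}$, and your argument closes that loophole: the Seifert longitude is homologically canonical, so every symmetry is lower-triangular in a meridian--longitude basis, a distance-one orbit forces an odd off-diagonal entry, and composing with $\varphi_*$ (which is diagonal, hence has even off-diagonal entry) produces a nontrivial parabolic, contradicting finiteness of the mapping class group of a cusped hyperbolic manifold. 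The determinant and distance computations check out.

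The genuine gap is in the second half: the theorem is an existence statement, and you never exhibit the examples. Your plan --- find a multi-cusped parent $W$ with an orientation-reversing involution swapping two cusps and acting on a third by a swap-type reflection, then fill symmetrically and invoke Thurston's Dehn filling theorem and volume finiteness --- is a reasonable strategy, but everything hinges on producing an explicit $W$ and verifying the peripheral action on the surviving cusp, which you explicitly defer (``Granting such a $W$\dots''). Until that is done, nothing has been proved. The paper proceeds quite differently: it builds an explicit one-parameter family $M_n$ as double branched covers of tangles $T_n$, where achirality and the distance-one orbit ($h(\widetilde\mu)=\widetilde\lambda$) are visible from a mirror-plus-rotation symmetry of the tangle; it proves hyperbolicity indirectly via the Montesinos trick together with Matignon's classification of cosmetic surgeries on non-hyperbolic knots in lens spaces (a chirally cosmetic $0$-filling at distance one from the trivial filling is impossible for non-hyperbolic knots in the relevant lens spaces); and it distinguishes the $M_n$ by computing $H_1(M_n;\Z)\cong\Z\oplus\Z_{n^3-n^2+n-1}$. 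To complete your route you would need to carry out the analogous concrete verifications for a specific $W$; as written, the infinitude claim is unsupported.
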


One of our motivations to study such 3-manifolds comes from the research 
about cosmetic surgeries on knots in \cite{IJM}. 
There the achirality of the figure-eight sibling can be observed, 
and the 3-manifolds in Theorem~\ref{thm:main} are considered as extensions to it. 
In fact, the first two authors asked the following in \cite[Question 4.1]{IJM}. 

\begin{question}
Can we find chirally cosmetic Dehn fillings on an achiral cusped hyperbolic 3-manifold 
along distance one slopes (other than the figure-eight sibling)? 
\end{question}

The 3-manifolds in Theorem~\ref{thm:main} give 
an affirmative answer to the above question as follows. 

\begin{corintro}
There exist infinitely many achiral 1-cusped hyperbolic 3-manifolds 
each of which admits chirally cosmetic Dehn fillings along distance one slopes. 
\end{corintro}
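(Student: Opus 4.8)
The plan is to extract from the construction underlying Theorem~\ref{thm:main} both an orientation-reversing self-homeomorphism and a distinguished slope realizing the distance-one phenomenon described above for the figure-eight sibling, and then to observe that Dehn filling along this slope and its image is automatically chirally cosmetic. First I would record the general principle that drives everything: if $M$ is achiral with orientation-reversing self-homeomorphism $h$ and $\gamma$ is any slope on the cusp torus $T=\partial M$, then $h$ carries the filling solid torus attached along $\gamma$ to the one attached along $h(\gamma)$, so it extends to a homeomorphism $M(\gamma)\to M(h(\gamma))$ that is again orientation-reversing. Consequently $M(\gamma)$ and $M(h(\gamma))$ are orientation-reversingly homeomorphic, and whenever $\gamma\ne h(\gamma)$ the pair $\{\gamma,h(\gamma)\}$ gives a chirally cosmetic Dehn filling.

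Second, I would produce a slope at distance one from its $h$-image. The symmetry $h$ acts on $H_1(T;\Z)\cong\Z^2$ by a matrix $A\in GL(2,\Z)$, and for a slope represented by a primitive vector $v$ one has $d(\gamma,h(\gamma))=|\det(v\mid Av)|$, since the class of $h(\gamma)$ is $Av$. The task is therefore to exhibit a primitive $v$ with $|\det(v\mid Av)|=1$; because the manifolds of Theorem~\ref{thm:main} are built to mirror the figure-eight sibling, the matrix $A$ can be computed explicitly from the construction and the existence of such a $v$ verified directly. As the distance is $1\ne 0$, the slopes $\gamma$ and $h(\gamma)$ are genuinely distinct, so the previous paragraph yields a chirally cosmetic filling along distance-one slopes for each such $M$.

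Finally, since Theorem~\ref{thm:main} provides infinitely many pairwise non-homeomorphic such $M$, each equipped with its own $h$ and distance-one slope, these furnish infinitely many achiral 1-cusped hyperbolic 3-manifolds admitting chirally cosmetic fillings along distance-one slopes, which is the assertion of the corollary. I would also emphasize that this is the very mechanism behind the theorem: the distance-one property is exactly the obstruction that prevents $M$ from being an amphicheiral null-homologous knot complement, because for such a complement the orientation-reversing symmetry sends $p/q$ to $-p/q$, a slope lying at the even distance $2|pq|$ from the original.

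The step I expect to be the main obstacle is the explicit production of the distance-one slope: one must pin down the action of $h$ on the cusp homology for the specific family in Theorem~\ref{thm:main} and solve $|\det(v\mid Av)|=1$ over primitive integer vectors. One should additionally check that the orientation-reversing homeomorphism between the two fillings is not accidentally realizable by an orientation-preserving one, so that the fillings are chirally rather than purely cosmetic; Mostow rigidity applied to the (hyperbolic) fillings handles this point for all but finitely many slopes.
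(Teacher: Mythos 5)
Your argument is correct, but it reaches the chirally cosmetic fillings by a different mechanism than the paper does. You invoke the general principle that an orientation-reversing self-homeomorphism $h$ of $M$ extends over the filling solid tori to give an orientation-reversing homeomorphism $M(\gamma)\to M(h(\gamma))$, and then search for a slope at distance one from its $h$-image; for the manifolds $M_n$ this search is already settled by Proposition~\ref{prop:ampmfd}, which shows that $h=\widetilde r\circ\widetilde m$ carries $\widetilde\mu$ to $\widetilde\lambda$ and that these two slopes have distance one, so the step you flag as the main obstacle is in fact free. The paper instead works downstairs: by the Montesinos trick, the chirally cosmetic $0$-banding taking $K_n$ to its mirror image $K_n!$ (Figure~\ref{fig_CosmeticBanding}) lifts to the statement that the trivial filling and the $0$-filling of $M_n$ yield $\Sigma_n$ and $-\Sigma_n$, orientation-reversingly homeomorphic lens spaces, with the two filling slopes meeting once. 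What the paper's route buys is an explicit identification of the filled manifolds as lens spaces, which is then fed into Matignon's theorem to establish hyperbolicity of $M_n$ and into the homology computation that distinguishes the $M_n$; your route buys a cleaner conceptual argument that needs no identification of the filled manifolds, at the price of still having to import hyperbolicity and infinitude from Theorem~\ref{thm:main}. One small remark: your closing concern that the fillings be ``chirally rather than purely cosmetic'' is not required by the paper's definition, which only asks that the two resultants be orientation-reversingly homeomorphic (and in any case the resultants here are the chiral lens spaces $\pm\Sigma_n$, so the concern is moot).
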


%The infinite family of manifolds in Theorem~\ref{thm:main} and Corollary~\ref{cor:main} are same. 
We construct our 3-manifolds in Section~\ref{sec:examples}, 
which are given as the double branched cover of certain 2-string tangles. 
Then, after recalling the definitions of a Dehn surgery, a Dehn filling, a banding, and the Montesinos trick in Section~\ref{sec:preliminary}, 
%and introduce terminologies used in this paper. 
%We also 
%In particular, we recall a fact that a banding is closely related to 
%an integral Dehn surgery by considering the double branched covering space. 
we show in Section~\ref{sec:hyp} that the interior of the obtained 3-manifolds are hyperbolic by using the Montesinos trick. 
%we obtain Theorem~\ref{thm:main} and Corollary~\ref{cor:main}.  
%In Section~\ref{sec:homology}, we calculate the homology groups of our manifolds, 
%which guarantee that each of our examples is not homeomorphic to any null-homologous knot complement in any closed oriented 3-manifold. 
In the last section, we show that each of our 3-manifolds is realized as 
the exterior of an amphicheiral knot in some achiral 3-manifold.

\section{Examples}\label{sec:examples}

In this section, we construct achiral 3-manifolds %with a torus boundary 
by taking the double branched covers over certain ``amphicheiral'' 2-string tangles. 
Here, by a \emph{2-string tangle} or simply a \emph{tangle}, 
we mean a pair consisting of a 3-ball and two properly embedded arcs in the 3-ball. 

%%%%%%%%%%%%%%%%%%%%%%%%%%%%%%%%
%%% Delete T_n^\pm and treat only T_n (170912) %%%% 
%%%%%%%%%%%%%%%%%%%%%%%%%%%%%%%%
%For any integer $n$, let us consider the tangles $T^{\pm}_n$ depicted in Figure~\ref{fig_Tn}. 
%Here the box labelled $n$ (resp.\ $-n$) indicates $n$ times right-handed  (resp.\ left-handed) half twists (see Figure~\ref{fig_twists}). 
%
%One can see that 
%$T^{+}_{-n}$ is obtained from $T^{-}_n$ 
%by performing crossing changes at all the crossings, 
%that is, $T^{+}_{-n}$ is the mirror image of $T^-_{n}$, 
%and also is obtained by a $\pi/2$-rotation from $T^-_n$. 
%
%\begin{figure}[!htb]
%\centering
%\begin{overpic}[width=.9\textwidth]{fig_Tn.eps}
%\put(9.6,29.3){$n$} 
%\put(8.3,9.7){$-n$} 
%\put(29.5,9.5){$n$} 
%\put(28,29.5){$-n$} 
%\put(68.8,29.3){$n$} 
%\put(67.5,9.7){$-n$} 
%\put(88.5,9.5){$n$} 
%\put(87,29.5){$-n$} 
%\end{overpic}
%\caption{The tangles $T_n^-$ and $T_{n}^+$. }
%\label{fig_Tn}
%\end{figure}

Let $n$ be an integer with $n \ne 0,1$. 
Unless otherwise noted, we assume that $n \ne 0, 1$ throughout this paper. 
Let us consider the tangle $T_n$ depicted in Figure~\ref{fig_Tn}. 
Here the box labelled $n$ (resp.\ $-n$) indicates $n$ times right-handed (resp.\ left-handed) half twists (see Figure~\ref{fig_twists}). 
The construction of $T_n$ is based on 
the two-component link %$L(m, \varepsilon_1, \varepsilon_2)$ 
proposed by Nikkuni and the third author in \cite[Figure 1.2]{NikkuniTaniyama}. 

\begin{figure}[!htb]
\centering
\begin{overpic}[width=.35\textwidth]{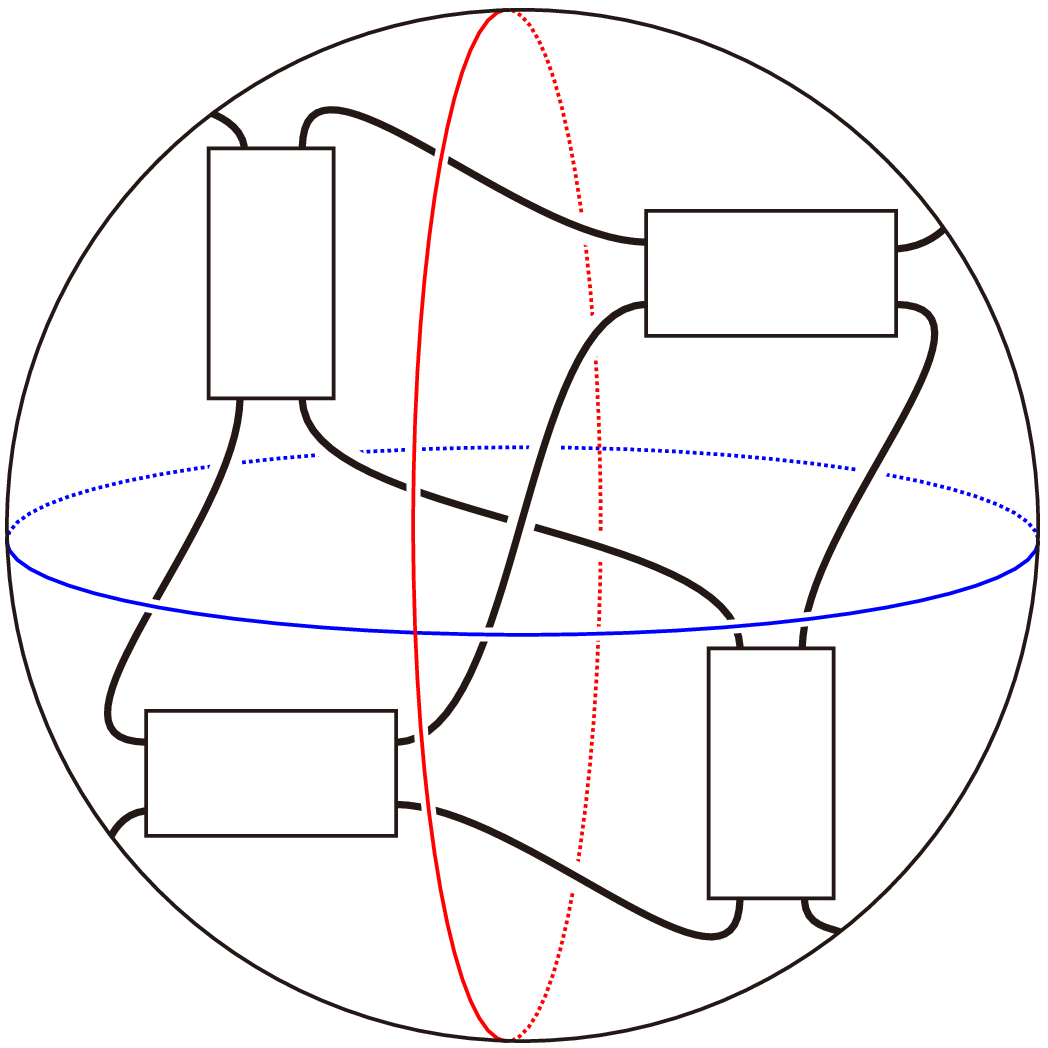}
\put(24,72){$n$} 
\put(20,24){$-n$} 
\put(68,72){$-n$} 
\put(72,23.5){$n$} 
\put(47,102){\Red{$\mu$}}
\put(101,47){\Blue{$\lambda$}}
\end{overpic}
\caption{The tangle $T_n$. } 
%The red (resp.\ blue) curve represents meridian $\mu$ (resp.\ the longitude $\lambda$) for $T_n$.
\label{fig_Tn}
\end{figure}

\begin{figure}[!htb]
\centering
\begin{overpic}[width=.7\textwidth]{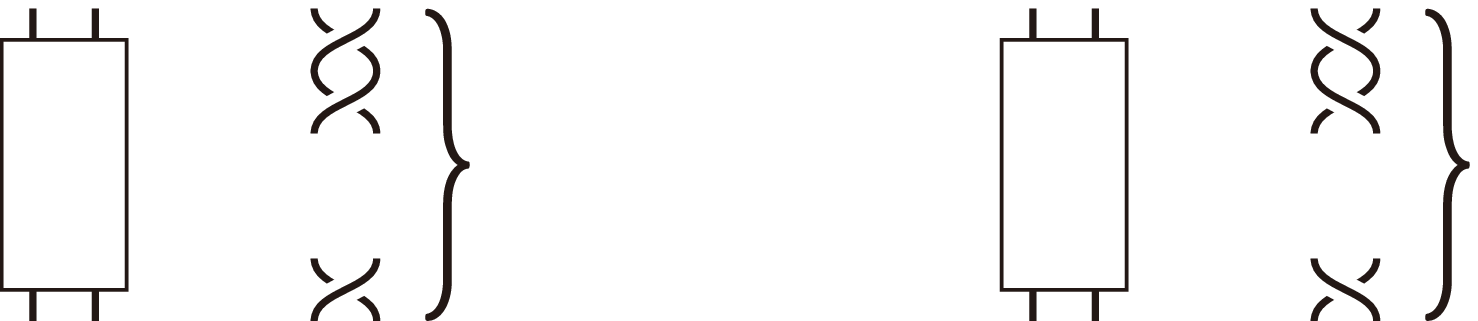}
\put(3.2,9){$n$}
\put(13,9){$=$}
\put(22.7,6.3){\rotatebox{90}{$\cdots$}}
\put(33,14){$n$ times }
\put(33,10){right-handed}
\put(33,6){half twists}
\put(69.5,9){$-n$}
\put(80.8,9){$=$}
\put(90.7,6.3){\rotatebox{90}{$\cdots$}}
\put(101,14){$n$ times}
\put(101,10){left-handed}
\put(101,6){half twists}
\end{overpic}
\caption{The cases where $n$ is positive (left-hand side) and negative (right-hand side).}
\label{fig_twists}
\end{figure}

Let $M_n$ be the compact oriented 3-manifold with a torus boundary 
obtained as %the interior of 
the double branched cover over $T_n$. 
Then we have the following. 

\begin{proposition}\label{prop:ampmfd} 
The 3-manifold $M_n$ is achiral, 
and is not homeomorphic to any amphicheiral null-homologous knot exterior
in any closed achiral 3-manifold.
\end{proposition}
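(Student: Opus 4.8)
The plan is to prove the two assertions separately. For achirality I would produce an orientation-reversing symmetry at the level of the tangle and lift it. The left--right reflection of the diagram in Figure~\ref{fig_Tn} (the reflection of $B^3$ reversing one horizontal coordinate) is orientation-reversing, reverses the handedness of every half-twist, and interchanges the left and right columns; checking the four boxes, each position $n$ is matched with $-n$ after the reflection, so this reflection carries the pair $(B^3,T_n)$ to itself. Since the double branched cover is an orientation-compatible branched covering, the reflection lifts to a self-homeomorphism $h$ of $M_n$ which is orientation-reversing exactly because the base map is. Hence $M_n$ is achiral.

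For the second assertion I would argue by contradiction, isolating the obstruction on the cusp torus. Suppose $M_n\cong E(K)$ for a null-homologous knot $K$ in a closed achiral $3$-manifold $W$, amphicheiral with respect to an orientation-reversing symmetry $\varphi$ of $W$. Because $K$ is null-homologous, the preferred longitude $\lambda$ is the unique (up to sign) rationally null-homologous slope on $\partial E(K)$, so \emph{every} self-homeomorphism preserves $\lambda$ up to sign. Writing boundary maps in a basis whose second vector is $\lambda$, each orientation-reversing symmetry acts as a lower-triangular matrix $\bigl(\begin{smallmatrix} a & 0\\ c & d\end{smallmatrix}\bigr)$ with $a,d\in\{\pm1\}$ and $ad=-1$, and one checks that the parity of the shear $c$ is independent of the chosen complement to $\lambda$. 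The meridian $\mu$ of $K$ is fixed up to sign by $\varphi$ and satisfies $d(\mu,\lambda)=1$; in the basis $(\mu,\lambda)$ the map $\varphi$ is diagonal, so it sends every slope $p/q$ to $-p/q$, a slope at the even distance $2|pq|$. Equivalently, $\varphi$ has \emph{even} shear, and its second eigenslope $\mu$ sits at distance one from $\lambda$.

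The heart of the proof is therefore to show that $M_n$ admits no such symmetry. I would first pin down $\lambda$ by computing $H_1(M_n)$ from the tangle, and then read off the boundary action of $h$ from its action on the four tangle endpoints and on the slopes $\mu,\lambda$ drawn in Figure~\ref{fig_Tn}. The key computation is that $h$ carries a suitable slope to one at distance exactly one --- this is the figure-eight-sibling phenomenon and also yields Corollary~\ref{cor:main} --- equivalently that the shear of $h$ is \emph{odd}, so that the only slopes fixed by $h$ are $\lambda$ and a slope at distance two from $\lambda$; in particular $h$ fixes no meridian. To finish I would upgrade this from $h$ to all orientation-reversing symmetries: since the interior of $M_n$ is hyperbolic (Section~\ref{sec:hyp}), Mostow rigidity makes the mapping class group finite and equal to $\mathrm{Isom}(M_n)$, the orientation-reversing symmetries form the single coset $h\cdot\mathrm{Isom}^{+}(M_n)$, and the shear is additive modulo two under composition. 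Thus it suffices to verify that every orientation-preserving symmetry has even shear on the cusp, after which the whole orientation-reversing coset inherits the odd shear of $h$ and none of its elements fixes a slope at distance one from $\lambda$. This contradicts the existence of the meridian-fixing symmetry $\varphi$ forced by the amphicheiral null-homologous hypothesis.

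The main obstacle I expect is precisely this last upgrade. The amphicheiral-knot hypothesis only demands \emph{one} compatible symmetry, with \emph{some} meridian depending on the (a priori unknown) ambient $W$, so ruling out the knot-exterior structure requires controlling the cusp action of the entire symmetry group rather than just the visible $h$. Hyperbolicity reduces this to a finite check, but carrying it out cleanly demands an explicit determination of $\mathrm{Isom}(M_n)$ and its action on $\partial M_n$ --- most naturally via the symmetry group of the tangle $T_n$ together with the Montesinos correspondence --- and it is the parity statement for the full orientation-reversing coset, not the single slope computation for $h$, that carries the real content.
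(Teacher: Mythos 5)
Your overall strategy---find an orientation-reversing symmetry of the tangle, lift it to the double branched cover, and obstruct the amphicheiral null-homologous knot-exterior structure by a parity-of-displacement argument on the cusp torus---matches the paper's, and your homological characterization of the preferred longitude and the shear-mod-2 bookkeeping are correct. But your choice of symmetry creates a concrete gap that propagates into the second half. The paper's homeomorphism is $h=\widetilde r\circ\widetilde m$, the lift of the mirror $m$ in the plane of the page \emph{composed with the quarter turn} $r$ satisfying $r\circ m(\mu)=\lambda$; the entire point of this choice is that $h(\widetilde\mu)=\widetilde\lambda$ with $d(\widetilde\mu,\widetilde\lambda)=1$, which is exactly the odd-displacement input your argument needs. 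Your left--right reflection, even granting that it is a symmetry of $(B^3,T_n)$ (matching the four boxes is not enough---the clasping between them must also be checked, and the paper's use of the quarter turn suggests the reflection alone is not a symmetry), would fix the top arc $\mu$ and carry the right arc $\lambda$ to the left arc, whose preimage upstairs is the \emph{same} slope $\widetilde\lambda$ (the two arcs are disjoint, so their lifts are parallel essential curves on the torus). Hence its lift acts diagonally in the basis $(\widetilde\mu,\widetilde\lambda)$ and has even shear, your obstruction evaluates to zero, and the second half of the proof cannot start. You must use the quarter-turn-composed-with-mirror symmetry.

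Granting the correct $h$, your insistence on upgrading from the single $h$ to the whole orientation-reversing coset is a legitimate refinement: the paper passes directly from ``$h$ carries $\widetilde\mu$ to $\widetilde\lambda$ at distance one'' to the conclusion, using only that the specific $\varphi$ coming from an amphicheiral structure moves every slope an even distance. Your coset argument closes this gap cleanly, and the step you single out as the main obstacle (every orientation-preserving symmetry has even shear) does not in fact require determining $\mathrm{Isom}(M_n)$: by Mostow rigidity every symmetry has finite order, its induced map on $H_1(\partial M_n)$ written in a basis whose second vector is the torsion slope is triangular with diagonal entries $\pm1$, and a finite-order triangular matrix with \emph{equal} diagonal entries must have zero shear (otherwise its powers have unbounded shear). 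So the shear of every orientation-preserving symmetry is $0$, the orientation-reversing coset inherits the odd shear of $h$, and no element of it can be the diagonal map forced by an amphicheiral null-homologous knot structure. With the symmetry corrected and this observation supplied, your plan is completable, at the cost of invoking the hyperbolicity of $M_n$ from Section~\ref{sec:hyp}, which the paper's own (terser) proof of Proposition~\ref{prop:ampmfd} does not use.
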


Here for a knot $K$ in a 3-manifold $M$, 
we denote by $N(K)$ an open tubular neighborhood of $K$, 
and the \emph{exterior}, denoted by $E(K)$, of $K$ is defined as $M \setminus N(K)$. 
%:
Note that the interior of $E(K)$ is homeomorphic to the complement, $M \setminus K$, of $K$.

\begin{proof}[Proof of Proposition~\ref{prop:ampmfd}] 
Let $m$ be the map mirroring $T_n$ along the plane of the paper, and let $T_n! = m(T_n)$. 
Notice that $T_n!$ is the tangle obtained from $T_n$ by performing crossing 
changes at all the crossings. 
Let $M_n!$ be the oriented 3-manifold obtained as %the interior of 
the double branched cover over $T_n!$. 
Then the lift $\widetilde m \colon M_n \to M_n!$ %of $m \colon T_n \to T_n!$ 
is an orientation-reversing homeomorphism. 
On the other hand, since $T_n$ is obtained by a $\pi/2$-rotation from $T_n!$, 
%say $r \colon T_n \to T_n!$, 
$M_n!$ is orientation-preservingly homeomorphic to $M_n$. 
Therefore %$M_n$ and $-M_n$ are orientation-preservingly homeomorphic, that is, 
$M_n$ admits an orientation-reversing self-homeomorphism $h = \widetilde r \circ \widetilde m$, 
where $\widetilde r \colon M_{n}! \to M_{n}$ is the lift of the $\pi/2$-rotation 
$r \colon T_{n}! \to T_n$. 
This implies that $M_n$ is achiral. 

On the torus boundary $\partial M_n$ of $M_n$, 
we consider the two slopes $\widetilde\mu$ and $\widetilde\lambda$ 
appearing as the preimage of the two loops $\mu$ and $\lambda$ % on $\partial T_n$ 
depicted in Figure~\ref{fig_Tn} respectively. 
Then the distance between $\widetilde\mu$ and $\widetilde\lambda$ is just one. 
Since $r \circ m (\mu) = \lambda$, 
we have $h(\widetilde\mu) = \widetilde\lambda$. 
Then, as explained in Section~\ref{sec:intro}, 
% the existence of the self-homeomorphism $h$ implies that 
$M_n$ is not homeomorphic to any amphicheiral null-homologous knot exterior 
in any closed achiral 3-manifold. 
\end{proof}

%One can see that $T_n!$ is obtained by a $\pi/2$-rotation from $T_n$. 

%\begin{figure}[!htb]
%\centering
%\begin{overpic}[grid,width=.35\textwidth]{fig_Tn.eps}
%\put(9.6,29.3){$n$} 
%\put(8.3,9.7){$-n$} 
%\put(29.5,9.5){$n$} 
%\put(28,29.5){$-n$} 
%\end{overpic}
%\caption{The tangles $T_n$. }
%\label{fig_Tn}
%\end{figure}

\section{Dehn surgery, banding, and Montesinos trick}\label{sec:preliminary}

In this section, we give a review on a Dehn surgery, a Dehn filling, a banding, and the Montesinos trick, to show that the interior of $M_n$ is hyperbolic for $n \ne 0,1$. 

\subsection{Dehn surgery}

Let $K$ be a knot in a closed oriented 3-manifold $M$ with the exterior $E(K)$. 
Let $\gamma$ be a slope on the boundary torus $\partial E(K)$. 
Then, the \textit{Dehn surgery on $K$ along $\gamma$} 
is defined as the following operation: 
Glue a solid torus $V$ to $E(K)$ such that a simple closed curve representing $\gamma$ bounds a meridian disk in $V$. 
We denote by $K(\gamma)$ the obtained 3-manifold. 
It is said that the Dehn surgery along the meridional slope 
is the \textit{trivial} Dehn surgery. 
Also it is said that a Dehn surgery along 
a slope which is represented by a simple closed curve 
intersecting the meridian at a single point 
is an \textit{integral} Dehn surgery. 

In the case where $K$ is null-homologous in $M$, 
we have the well-known bijective correspondence 
between $\mathbb{Q} \cup \{ 1/0 \}$ and the set of slopes on $\partial E(K)$, 
which is given by using the meridian-preferred longitude system for $K$. 
When the slope $\gamma$ corresponds to $r \in \mathbb{Q} \cup \{1/0\}$, then the Dehn surgery on $K$ along $\gamma$ is said to be the \textit{$r$-surgery} on $K$, 
and the obtained 3-manifold is denoted by $K(r)$.
In this case, an integral Dehn surgery corresponds to an $n$-surgery with an integer $n$. 

Two Dehn surgeries on a knot $K$ along 
two slopes are said to be \emph{chirally cosmetic}\footnote{This terminology was used in Kirby's problem list \cite{Kirby}. 
In \cite{BleilerHodgsonWeeks}, it is called \emph{reflectively cosmetic}.} 
if two obtained 3-manifolds are orientation-reversingly homeomorphic.

\subsection{Dehn filling}

%A Dehn filling is a simple extension of a Dehn surgery. 
% starting a 3-manifold with a torus boundary. 
While there are some overlaps with notions on Dehn surgery, 
here we briefly review a Dehn filling. 
Let $M$ be a compact connected oriented 3-manifold with a torus boundary $\partial M$, 
and $\gamma$ a slope on $\partial M$. 
The \emph{Dehn filling on $M$ along $\gamma$} 
is the operation gluing a solid torus $V$ to $M$ so that 
a simple closed curve representing $\gamma$ bounds a meridian disk in $V$. 
%We denote the obtained manifold by $M(\gamma)$. 
%In the case where $M$ is the exterior 
%(i.e.\ the complement of an open tubular neighborhood) 
%of a null-homologous knot in a closed oriented 3-manifold, 
%we have the well-known bijective correspondence 
%between $\mathbb{Q} \cup \{ 1/0 \}$ and the set of slopes on $\partial M$, 
%which is given by using the meridian-preferred longitude system for $K$. 
%When the slope $\gamma$ corresponds to $r \in \mathbb{Q} \cup \{1/0\}$, 
%then the Dehn filling on $M$ along $\gamma$ is said to be 
%the \textit{$r$-Dehn filling} on $M$. 
As in the case for a Dehn surgery, 
if we have the correspondence 
between the slope $\gamma$ and $r \in \mathbb{Q} \cup \{1/0\}$ 
using the meridian-preferred longitude system, 
then the Dehn filling on $M$ along $\gamma$ is said to be the \textit{$r$-Dehn filling} on $M$. 

Two Dehn fillings on $M$ along 
two slopes are said to be \emph{chirally cosmetic} if 
the resultants are orientation-reversingly homeomorphic. 
For recent studies on chirally cosmetic Dehn fillings, 
we refer the reader to \cite{IJM, IchiharaSaito, IchiharaWu, NiWu}.

%\subsection{Dehn filling}
%
%The \emph{trivial} Dehn filling, an \emph{integral} Dehn filling, 
%a \emph{purely} or \emph{chirally cosmetic} Dehn filling are defined by the same way as these of 
%Dehn surgeries. 

\subsection{Banding}
We call the following operation on a link a \textit{banding}\footnote{The operation is sometimes called a band surgery, a bund sum (operation), or a hyperbolic transformation in a variety of contexts. 
In this paper, referring to \cite{Bleiler}, we use the term banding to clearly distinguish it from a Dehn surgery on a knot.} on the link. 
For a given link $L$ in $S^3$ and an embedding $b \colon I \times I \to S^3$ such that $b ( I \times I ) \cap L = b ( I \times \partial I )$, where $I$ denotes a closed interval, we obtain a (new) link as $ ( L - b ( I \times \partial I ) ) \cup b ( \partial I \times I )$. 
%We call the link so obtained \emph {the link obtained from $L$ by a banding 
%along the band $b$}. 

\begin{remark}
On performing a banding, it is often assumed the compatibility of orientations of the original link and the obtained link, but in this paper, we do not assume that. 
Also note that this operation for a knot yielding a knot appears as the $n=2$ case of the $H(n)$-move on a knot, which was introduced in \cite{HosteNakanishiTaniyama}. 
\end{remark}

%In the next section, we will consider rational tangles. 
It is well-known that a rational tangle is determined by the meridional disk in the tangle. 
The boundary curve of the meridional disk is 
parameterized by an element of $\mathbb{Q} \cup \{1/0\}$, 
called a \emph{slope} of the rational tangle. 
A rational tangle is said to be \emph{integral} if the slope is an integer or $1/0$. 
For brevity, we call an integral tangle with a slope $n$ an \emph{$n$-tangle}. 

A banding can be regarded as an operation replacing a $1/0$-tangle into an $n$-tangle. 
Then we call this banding an \emph{$n$-banding}. 

A banding on a link $L$ is said to be \emph{chirally cosmetic} 
if the link obtained from $L$ by the banding is ambient isotopic to 
the mirror image $L!$ of $L$ in $S^3$.

\subsection{Montesinos trick}

We here recall the Montesinos trick originally introduced in \cite{Montesinos}. 
Let $\Sigma$ be the double branched cover of $S^3$ branched along a link $L \subset S^3$. 
Let $K$ be a knot in $\Sigma$, which is \textit{strongly invertible} with respect to the preimage $\bar{L}$ of $L$, that is, there is an orientation preserving involution of $\Sigma$ with the quotient $S^3$ and the fixed point set $\bar{L}$ which induces an involution of $K$ with two fixed points. 
Then the 3-manifold $K(\gamma)$ obtained by an integral Dehn surgery on $K$ is homeomorphic to the double branched cover $\Sigma'$ along the link $L'$ obtained from $L$ by a banding along the band appearing as the quotient of $K$. 
%with the corresponding framing \fbox{slope?}. 
%See \cite{BakerBuck, BakerBuckLecuona, Bleiler} for example. 
That is, we have the following commutative diagram: 
\[\xymatrix@!C{
\bar L \subset \Sigma \ar[d]_{\text{double branched covering}} \qquad \ar[r]^{\text{Dehn surgery on }K} & \qquad \Sigma' = K(\gamma) \ar[d]^{\text{double branched covering}} \\
L \subset S^3 \qquad
\ar[r]^{\text{banding on }L} & \qquad L' \subset S^3 
}\]

\section{Hyperbolicity}\label{sec:hyp}

In this section, 
we show that the interior of our 3-manifolds $M_{n}$ given in Section~\ref{sec:examples} are hyperbolic, 
and prove our main theorem. 
Let $K_n$ be the link in $S^3$ obtained by closing the tangle $T_n$ 
as shown in Figure~\ref{fig_Kn}. 

\begin{figure}[!htb]
\centering
\begin{overpic}[width=.3\textwidth]{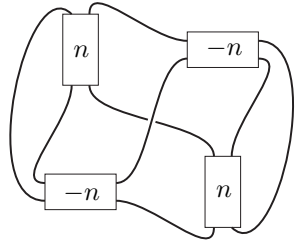}
\put(22.5,64){$n$}
\put(72.5,15){$n$}
\put(69,64.5){$-n$}
\put(19,14){$-n$}
%\put(64,6.5){$-n$}
%\put(65.5,29){$n$}
%\put(86.5,29){$-n$}
%\put(88,7){$n$}
\end{overpic}
\caption{The link $K_n$.}
%\caption{The knot $K_n^-$ (left-hand side) and the knot $K_n^+$ (right-hand side)}
\label{fig_Kn}
\end{figure}

\begin{proposition}\label{prop:type} 
The knot $K_n$ is the two-bridge knot (or link) with Schubert's normal form 
\[ S(n^4 - 2n^3 + 2n^2 - 2n +1, n^3 - 2n^2 + n - 1) \, . \] 
\end{proposition}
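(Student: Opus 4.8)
The plan is to identify $K_n$ as the numerator closure of a rational tangle and to compute its Conway fraction, and then to invoke the classification of two-bridge links: the numerator closure of a rational tangle of fraction $p/q$ is precisely the two-bridge link $S(p,q)$. Everything thus reduces to showing that $T_n$, or a tangle with the same closure, is rational, and to evaluating its fraction as a function of $n$.

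First I would bring the diagram of $K_n$ in Figure~\ref{fig_Kn} into a standard rational ($4$-plat) form by a sequence of flypes and $\pi/2$-rotations, so that the four twist boxes carrying $n,-n,-n,n$ are reorganized into a single alternating chain of horizontal and vertical twist regions. Along this reduction I would track the tangle fraction using Conway's calculus: adding a horizontal twist sends $F \mapsto F \pm 1$ and a $\pi/2$-rotation sends $F \mapsto -1/F$ (vertical twists being obtained by conjugating the former with the latter). Carrying the $\pm n$ parameters through these rules produces a continued fraction in $n$; one finds an expansion equivalent to $[n,n,-1,n,n]$, i.e.
\[
n + \cfrac{1}{n + \cfrac{1}{-1 + \cfrac{1}{\,n + \cfrac{1}{n}\,}}}
= \frac{n^4 - 2n^3 + 2n^2 - 2n + 1}{n^3 - 2n^2 + n - 1},
\]
whose value is exactly $p/q$. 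Reading off $(p,q)$ and appealing to Schubert's classification then yields the stated normal form, the parity of $p$ (odd for $n$ even, even for $n$ odd) accounting for whether $K_n$ is a knot or a two-component link.

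The main obstacle is the geometric reduction together with the bookkeeping of signs and handedness: a $2\times 2$ array of twist regions is not in standard rational form, and the single $-1$ appearing in the continued fraction must be produced correctly by the isotopy, since it comes not from any of the four boxes but from the way they are assembled. I would guard against errors by two independent cross-checks: comparing the determinant $\det K_n = |n^4 - 2n^3 + 2n^2 - 2n + 1|$ computed from a Goeritz matrix of the diagram, and specializing to small values such as $n=2$ (giving $S(5,1)$, a knot) and $n=3$ (giving $S(40,11)$, a link).

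As an alternative route to two-bridgeness that fits the double-branched-cover viewpoint of this paper, I would note that the double branched cover of $S^3$ over $K_n = N(T_n)$ is obtained from $M_n$ by the Dehn filling lifting the numerator closure, via the Montesinos correspondence of Section~\ref{sec:preliminary} (cf.\ \cite{Montesinos}); if this filling yields the lens space $L(p,q)$, then $K_n$ is the two-bridge link $S(p,q)$ by the characterization of two-bridge links as exactly those with lens space double branched covers. Identifying that lens space is essentially equivalent to computing the tangle fraction, however, so I would still treat the continued-fraction computation above as the core of the argument.
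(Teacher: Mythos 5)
Your proposal is correct and follows essentially the same route as the paper: the paper's proof simply asserts that one can diagrammatically check that $K_n$ has Conway's normal form $C(n,n,-1,n,n)$ and then evaluates the same continued fraction you do (the paper writes the reciprocal $\frac{n^3-2n^2+n-1}{n^4-2n^3+2n^2-2n+1}$, which is only a difference of convention). Your extra cross-checks (determinant, the cases $n=2,3$, parity) and the double-branched-cover alternative are sound but not part of the paper's argument.
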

For the definitions of Schubert's normal form and Conway's normal form 
for a two-bridge link, see for example \cite[Section 2.1]{KawauchiSurvey}.  
\begin{proof}[Proof of Proposition~\ref{prop:type}] 
One can diagrammatically check that $K_n$ is the two-bridge knot (or link) 
with Conway's normal form $C(n, n, -1, n, n)$. 
Calculating the continued fraction, we have  
\[ 
\dfrac{1}{n + \dfrac{1}{n + \dfrac{1}{ -1 +  \dfrac{1}{ n + \dfrac{1}{n}}}}} 
= \dfrac{n^3 - 2n^2 + n - 1}{n^4 - 2n^3 + 2n^2 - 2n +1}\, .  \qedhere \] 
\end{proof}
We note that $K_n$ is a two-bridge knot when $n$ is even, 
and a two-bridge link when $n$ is odd. 
Also note that $K_2 = S(5,1)$ is the $(2,5)$-torus knot. 
%and $K^{+}_2 = S(45,19)$ is the knot $9_{23}$ in the knot table. 
%
%One can see that $K^-_{n}$ is the mirror image of $K^+_{-n}$, 
%for the latter is obtained by performing crossing changes at all the crossings of the former. 

\begin{figure}[!htb]
\centering
\begin{overpic}[width=\textwidth]{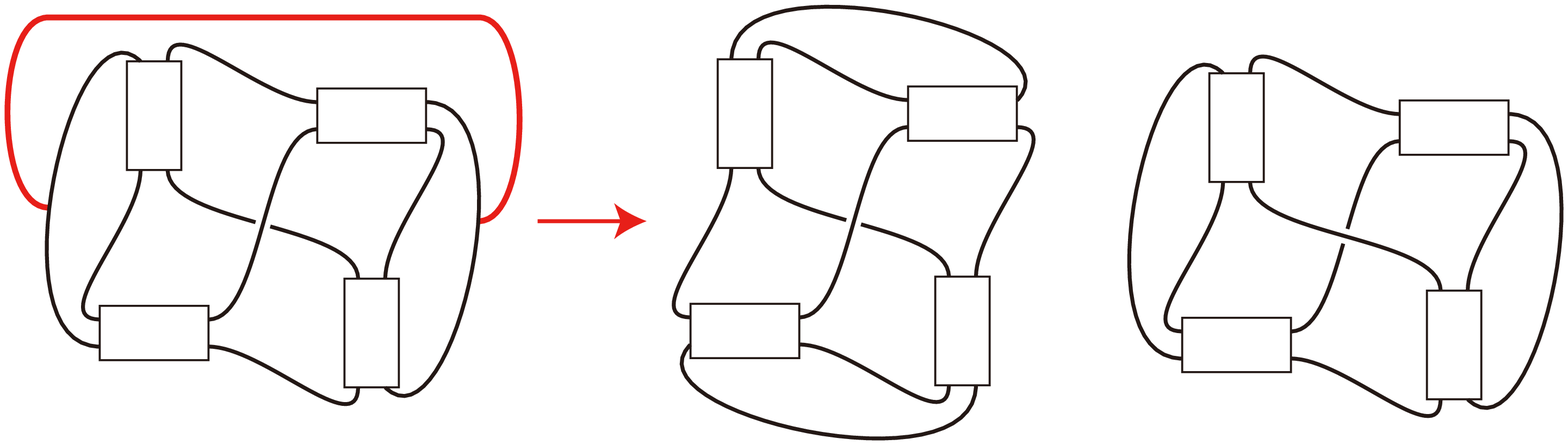}
\put(15,28){$\Red{0}$} 
\put(67,12){$=$} 
\put(7.5,6){$-n$} 
\put(22.8,6.5){$n$} 
\put(9,20){$n$} 
\put(21.3,20){$-n$} 
\put(45.5,6.5){$-n$} 
\put(60.8,6.5){$n$} 
\put(46.7,20.5){$n$} 
\put(59.3,20.5){$-n$} 
\put(78.5,5.5){$n$} 
\put(90.8,5.5){$-n$} 
\put(77,19.5){$-n$} 
\put(92.3,19.5){$n$} 
\end{overpic}
\caption{The $0$-banding on $K_n$ yields the mirror image $K_n!$. }
\label{fig_CosmeticBanding}
\end{figure}

As shown in Figure~\ref{fig_CosmeticBanding}, 
each of $K_n$ admits a chirally cosmetic banding. 
In particular, for $K_2$, this chirally cosmetic banding 
was essentially discovered by Zekovi\'c~\cite{Zekovic}, 
and pointed out by the first two authors that the upstairs of this banding corresponds to 
the chirally cosmetic filling on the figure-eight sibling, see \cite[Section 4]{IJM}. 

Let $\Sigma_n$ be the lens space of type 
$(n^4 - 2n^3 + 2n^2 - 2n +1, n^3 - 2n^2 + n - 1)$ 
which is obtained as the double branched cover of $S^3$ branched along $K_n$. 
Since $n \ne 0,1$, $\Sigma_n$ is not homeomorphic to $S^{3}$ or $S^{2} \times S^{1}$.  
Then we obtain the following. 
%Applying the Montesinos trick conversely to the cosmetic banding on $K_n$, 
%we obtain the following. 

\begin{theorem}\label{thm:main} 
There exist infinitely many achiral 1-cusped hyperbolic 3-manifolds 
each of which is not homeomorphic to any amphicheiral null-homologous knot complement in any closed achiral 3-manifold. 
\end{theorem}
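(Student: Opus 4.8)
The plan is to deduce Theorem~\ref{thm:main} from Proposition~\ref{prop:ampmfd} together with two further facts: that the interior of $M_n$ is hyperbolic for infinitely many $n$, and that among these infinitely many $M_n$ are pairwise non-homeomorphic. Indeed, Proposition~\ref{prop:ampmfd} already supplies, for every $n \ne 0,1$, both the achirality of $M_n$ and the assertion that $M_n$ is not homeomorphic to any amphicheiral null-homologous knot exterior in a closed achiral $3$-manifold. Since $\partial M_n$ is a single torus, once $\mathrm{int}(M_n)$ is known to be hyperbolic it is automatically $1$-cusped. Thus the only content left to establish is geometric, and I would organize it around the surgery description coming from the twist regions of $T_n$.

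First I would record the topological identification underlying everything: closing the tangle $T_n$ as in Figure~\ref{fig_Kn} amounts, on the level of double branched covers, to gluing a solid torus to $M_n$, so that $\Sigma_n$ is obtained from $M_n$ by a Dehn filling and $\mathrm{int}(M_n) = \Sigma_n \setminus \kappa_n$ is the complement of the core knot $\kappa_n$ of that solid torus in the lens space $\Sigma_n$; equivalently, $\mathrm{int}(M_n)$ is the double branched cover of $(B^3, T_n)$. Next I would treat the integer $n$ as a twisting parameter. Each of the four twist boxes of $T_n$ is produced from a crossingless model tangle by $\mp 1/k$-surgery on a small crossing circle encircling the two strands, each unit of surgery inserting a full twist, i.e.\ two half-twists. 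Drilling these crossing circles produces a single fixed tangle, and passing to double branched covers yields one fixed $3$-manifold $W$ equipped with extra cusps coming from the lifts of the crossing circles. By construction $\mathrm{int}(M_n)$ is recovered from $W$ by Dehn filling these cusps along slopes $\sigma_i(n)$ whose lengths tend to infinity as $|n| \to \infty$ (the surgery coefficients behave like $1/k \sim 2/n$). Restricting to a fixed parity of $n$ absorbs the half-twist bookkeeping and still leaves infinitely many $n$, which is all the theorem requires.

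With this scaffolding in place, the argument is a direct application of Thurston's hyperbolic Dehn surgery theorem. I would first verify that the parent manifold $W$ is hyperbolic of finite volume -- a single, $n$-independent check, carried out either by recognizing $W$ (or its quotient orbifold, a fixed link in $S^3$ with $\mathbb{Z}/2$ cone locus and crossing-circle cusps) as a known hyperbolic object, or by a rigorous computer verification. Thurston's theorem then gives, for all but finitely many choices of the filling slopes, that $\mathrm{int}(M_n)$ is hyperbolic; hence $\mathrm{int}(M_n)$ is hyperbolic for all sufficiently large $|n|$, producing infinitely many achiral $1$-cusped hyperbolic manifolds enjoying the non-realizability property of Proposition~\ref{prop:ampmfd}. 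The same theorem settles distinctness without any homology computation: the volumes $\mathrm{Vol}(\mathrm{int}(M_n))$ approach $\mathrm{Vol}(W)$ from below as $|n|\to\infty$ without ever attaining it, so they take infinitely many distinct values, and by Mostow rigidity infinitely many of the $M_n$ are then pairwise non-homeomorphic.

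The main obstacle I anticipate is precisely this hyperbolicity input, and it has two parts. The delicate bookkeeping is to pin down the correct model tangle and crossing circles so that all four twist regions are simultaneously governed by the single parameter $n$, and to confirm that the induced filling slopes on the cusps of $W$ genuinely escape to infinity -- in particular tracking how a crossing circle, which is disjoint from and evenly linked with the branch set and hence lifts to two components, contributes cusps upstairs and what surgery coefficient this produces. The second and harder part is establishing that the fixed parent manifold $W$ is actually hyperbolic; everything downstream is then formal. If one prefers to avoid identifying $W$ explicitly, the alternative is to prove hyperbolicity of $\mathrm{int}(M_n)$ by geometrization, showing directly that it is irreducible, atoroidal, anannular and not Seifert fibered by transferring the analysis of essential surfaces to the tangle $(B^3, T_n)$ downstairs through the branched-cover involution; I expect this route to require more case analysis than the surgery argument, which is why I would pursue the Dehn-filling approach first.
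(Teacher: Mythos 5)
Your overall architecture is sound and the reduction is correctly organized: Proposition~\ref{prop:ampmfd} supplies achirality and non-realizability for every $n\ne 0,1$, so what remains is hyperbolicity of $\mathrm{int}(M_n)$ and pairwise distinctness for infinitely many $n$. However, your route to hyperbolicity has a genuine gap exactly at its load-bearing step. You reduce everything to the claim that the fixed parent manifold $W$ (the double branched cover of the tangle with all crossing circles drilled) is hyperbolic of finite volume, and then you leave that claim unestablished, proposing to ``recognize $W$ as a known hyperbolic object'' or to verify it by computer. Nothing in the construction makes the hyperbolicity of $W$ automatic --- an augmented tangle of this shape could a priori contain essential annuli or tori --- and without it Thurston's hyperbolic Dehn surgery theorem gives you nothing, so the volume argument for distinctness collapses as well. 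As written, the proof is a plan contingent on an unverified geometric input rather than a complete argument. (The downstream steps are fine: once $W$ is hyperbolic, the filling slopes do escape to infinity with $|n|$, the volumes converge to $\mathrm{Vol}(W)$ from below without attaining it, and Mostow rigidity finishes distinctness.)

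The paper avoids this difficulty entirely by a different mechanism, and the contrast is instructive. It first exhibits a chirally cosmetic banding on the two-bridge link $K_n$ (Figure~\ref{fig_CosmeticBanding}) and lifts it via the Montesinos trick: $M_n$ becomes the exterior of a knot $J_n$ in the lens space $\Sigma_n$ on which the trivial surgery and the $0$-surgery produce $\Sigma_n$ and $-\Sigma_n$, i.e.\ a chirally cosmetic pair along distance one slopes. Matignon's classification of cosmetic surgeries on non-hyperbolic knots in lens spaces then rules out $J_n$ being non-hyperbolic, since for such knots a chirally cosmetic pair with the trivial surgery forces $r\ne 0$. This yields hyperbolicity for \emph{all} $n\ne 0,1$ with no geometric limit argument and no parent manifold to certify; distinctness is then handled by the elementary computation $H_1(M_n;\Z)\cong\Z\oplus\Z_{n^3-n^2+n-1}$. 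If you want to salvage your approach, you must actually prove $W$ hyperbolic (or argue irreducibility, atoroidality and anannularity of $\mathrm{int}(M_n)$ directly, as you mention in your fallback); otherwise the Matignon route is the one that closes the argument.
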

\begin{proof} 
%We only consider the knot $K_n^-$ since the same argument can be applied to the knot $K_n^+$. 
%Set $K_n = K_n^-$, and let $\Sigma_n$ be the lens space of type 
Applying the isotopic deformation and the Montesinos trick conversely 
to the chirally cosmetic banding on $K_n$ shown in Figure~\ref{fig_CosmeticBanding}, 
by Proposition~\ref{prop:type}, 
we obtain a surgery descriptions of $\Sigma_n$ and $-\Sigma_n$ 
as in Figure~\ref{fig_BranchedCover}. 
That is, the trivial Dehn surgery and the 0-surgery on the red component in Figure~\ref{fig_BranchedCover} 
yield $\Sigma_n$ and $-\Sigma_n$ respectively. 

Drilling along the red component, 
we obtain the surgery description of the 3-manifold with a torus boundary $M_n$ 
as in Figure~\ref{fig_Mn}, 
which is already constructed in Section~\ref{sec:examples}. 
Then $M_n$ is the exterior of a knot in $\Sigma_n$, 
which admits chirally cosmetic Dehn fillings along distance one slopes. 
%Here note that $M_n$ is achiral by the construction. 

By the classification of cosmetic surgeries on a non-hyperbolic knot in a lens space 
due to Matignon~\cite{Matignon}, we see that the interior of $M_n$ is hyperbolic. 
In fact, for a non-hyperbolic knot $J$ in a lens space other than $S^{3}$ or $S^{2}\times S^{1}$, 
if the trivial Dehn surgery and the $r$-surgery on $J$ are chirally cosmetic, 
then $r \ne 0$, see \cite[Theorems 3.2 and 4.1]{Matignon}. 

It is enough to show that if $n \ne n'$, then $M_n$ is not homeomorphic to $M_{n'}$. 
Then the following lemma completes the proof. 
%By the classification of lens spaces (see \cite[Theorem D.1.2]{KawauchiSurvey} for example), 
%we see that if $n \ne n'$, then $\Sigma_n$ is not homeomorphic to $\Sigma_{n'}$. 
%Therefore if $n \ne n'$, then $M_n$ is not homeomorphic to $M_{n'}$. 
%\fbox{we have to use invariant (homology in the next section)?}
\end{proof}

\begin{figure}[!htb]
\centering
\begin{overpic}[width=.9\textwidth]{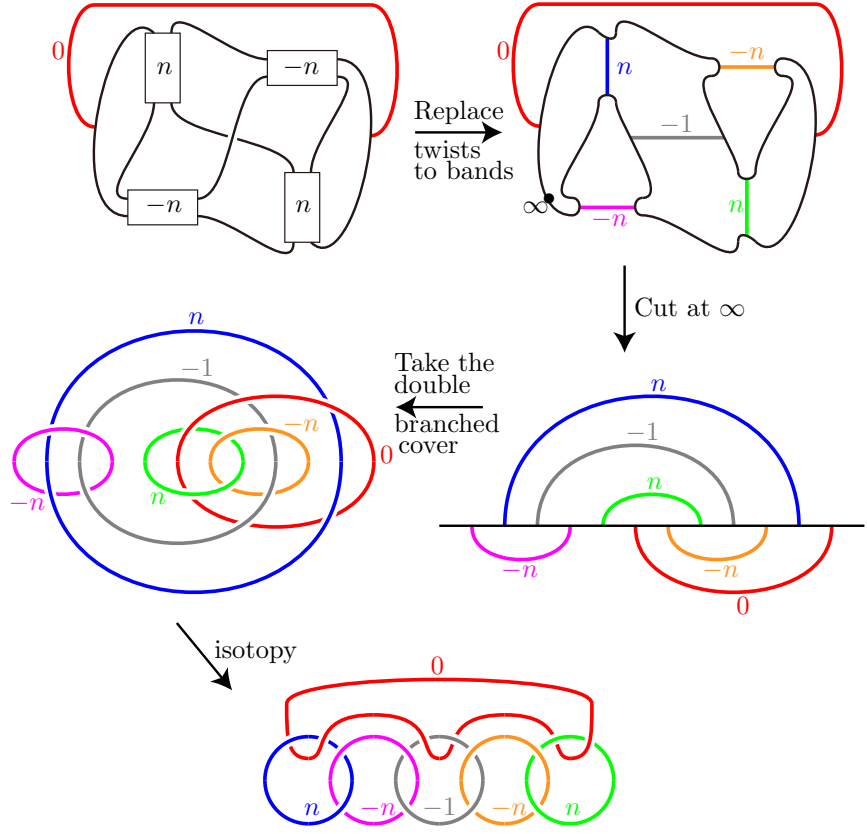}
\put(5,90){$\Red{0}$} 
\put(17.3,88.3){$n$} 
\put(16,72.1){$-n$} 
\put(32,88.3){$-n$} 
\put(33.5,72.1){$n$} 
\put(57,90){$\Red{0}$} 
\put(71,88.3){$\Blue{n}$} 
\put(68,70.5){$\Magenta{-n}$} 
\put(84,90){$\Orange{-n}$} 
\put(84,72.1){$\Green{n}$} 
\put(76,81.5){$\Gray{-1}$} 
\put(85,25.2){$\Red{0}$} 
\put(75,51.2){$\Blue{n}$} 
\put(57.5,29.5){$\Magenta{-n}$} 
\put(81,29.5){$\Orange{-n}$} 
\put(74.5,39.7){$\Green{n}$} 
\put(72,45.5){$\Gray{-1}$} 
\put(43.4,42.8){$\Red{0}$} 
\put(21,59){$\Blue{n}$} 
\put(0,37.3){$\Magenta{-n}$} 
\put(32,46.6){$\Orange{-n}$} 
\put(16.5,38){$\Green{n}$} 
\put(20,53){$\Gray{-1}$} 
\put(49.3,18.2){$\Red{0}$} 
\put(34.5,1.7){$\Blue{n}$} 
\put(41,1.7){$\Magenta{-n}$} 
\put(56.3,1.7){$\Orange{-n}$} 
\put(65,1.7){$\Green{n}$} 
\put(48.3,1.7){$\Gray{-1}$} 
\put(47.3,83){Replace} 
\put(47.3,78.5){twists} 
\put(47.3,76){to bands}  
\put(45,53.5){Take the} 
\put(45,51){double} 
\put(45,46.5){branched} 
\put(45.5,44){cover} 
\put(62.3,73){$\bullet$}
\put(60,72){$\infty$}
\put(73,60){Cut at $\infty$}
\put(24,20){isotopy} 
\end{overpic}
\caption{Applying the Montesinos trick.}
\label{fig_BranchedCover}
\end{figure}

\begin{lemma}\label{lem:homology} 
We have $H_1(M_n; \Z) \cong \Z \oplus \Z_{n^3 - n^2 + n -1}$.  
\end{lemma}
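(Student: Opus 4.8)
The plan is to read off $H_1(M_n;\Z)$ directly from the surgery description already produced in the proof of Theorem~\ref{thm:main}. By construction $M_n$ is the exterior of the red component $L_0$ in $S^3$ after performing the Dehn surgeries on the remaining (five) components of the framed link in the bottom diagram of Figure~\ref{fig_BranchedCover}, i.e.\ the description in Figure~\ref{fig_Mn}. First I would fix an orientation of each component and record the symmetric linking matrix $\Lambda=(\ell_{ij})$, putting the surgery framings (the integers $n,-n,-n,n,-1$) on the diagonal and the pairwise linking numbers off the diagonal.

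Standard surgery theory then presents the first homology. Taking the meridians $x_0,x_1,\dots,x_m$ of the components as generators, the exterior of the whole link has $H_1$ free abelian on these meridians; drilling $L_0$ imposes no relation, so $x_0$ survives as a free generator, while each surgered component $L_i$ ($i\ge 1$) contributes the relation $\sum_j \ell_{ij}x_j=0$ (its surgery slope $\ell_{ii}\mu_i+\lambda_i$ becoming null-homologous, with $\lambda_i$ expressed as $\sum_{j\ne i}\ell_{ij}x_j$). Hence $H_1(M_n;\Z)$ is the cokernel of the $m\times(m+1)$ presentation matrix $A$ obtained from $\Lambda$ by retaining the column of $x_0$ but deleting its relation row, and I would compute the Smith normal form of $A$.

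Since $M_n$ is the exterior of a knot in the rational homology sphere $\Sigma_n$, its first Betti number is $1$; thus $A$ has full rank $m$ over $\Q$ and the cokernel has exactly one free summand, accounting for the $\Z$ factor. The torsion is the product of the invariant factors of $A$, equivalently the greatest common divisor of its maximal ($m\times m$) minors, and I expect all but the largest invariant factor to equal $1$, so that a single torsion summand remains; confirming this amounts to checking that the $(m-1)\times(m-1)$ minors have greatest common divisor $1$. The target value is best recognized through the factorization $n^3-n^2+n-1=(n-1)(n^2+1)$, which also explains its relation to $|H_1(\Sigma_n)|=n^4-2n^3+2n^2-2n+1=(n-1)^2(n^2+1)$: the trivial (meridional) filling of $M_n$ kills a meridian whose image in the free part is $\pm(n-1)$ times a generator, multiplying the torsion order by $(n-1)$.

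The main obstacle will be bookkeeping rather than any conceptual difficulty: correctly extracting the linking numbers and framings from the diagram (tracking signs and the $-1$-framed component with care), and then evaluating the resulting integer determinants, whose entries are linear in $n$, to verify that the maximal minors have greatest common divisor exactly $(n-1)(n^2+1)$ while the smaller invariant factors collapse to $1$. As consistency checks I would test the case $n=2$, where $M_2$ is the figure-eight sibling and the formula yields $\Z\oplus\Z_5$, and cross-check the torsion order against $|H_1(\Sigma_n)|$ via the filling relation above.
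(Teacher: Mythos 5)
Your proposal is correct and follows essentially the same route as the paper: the authors also take the meridians of the six link components as free generators of $H_1$ of the link exterior, impose one linking-number relation per filled component (with the framings $n,-n,-1,-n,n$ on the diagonal and no relation for the drilled red component), and reduce the resulting $5\times 6$ presentation matrix to the form $\begin{pmatrix} n^3-n^2+n-1 & 0\end{pmatrix}$. Your consistency checks (the $n=2$ case and the comparison with $|H_1(\Sigma_n)|=(n-1)^2(n^2+1)$ via the meridional filling) are sound additions but do not change the argument.
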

\begin{proof} 
Let $X$ be the exterior of the 6-components link in $S^{3}$ as in Figure~\ref{fig_generator}. 
Taking representatives of meridians $x, a, b, c, d, e$ as in Figure~\ref{fig_generator}, 
we have 
\[ H_1(X; \Z) \cong \Z^6 = \langle a \rangle \oplus \langle b \rangle \oplus \langle c \rangle \oplus \langle d \rangle \oplus \langle e \rangle \oplus \langle x \rangle\, . \]  
Reading off the slopes from Figure~\ref{fig_Mn}, 
we see that suitable Dehn fillings on the five boundaries supply the following relations.  
\begin{align*} 
\begin{cases} 
n a + x - b = 0 \, ,  \\ 
-na + c - a = 0 \, ,  \\ 
-c + b - x - d = 0 \, ,  \\ 
-nd -c + e = 0 \, ,  \\ 
ne + d + x = 0 \, . 
\end{cases}
\end{align*}
Then we have 
\[ 
\begin{pmatrix} 
n & -1 & 0 & 0 & 0 & 1 \\ 
-1 & n & 1 & 0 & 0 & 0 \\ 
0 & 1 & -1 & -1 & 0 & -1 \\ 
0 & 0 & -1 & -n & 1 & 0 \\ 
0 & 0 & 0 & 1 & n & 1 \\ 
\end{pmatrix}
\] 
as the presentation matrix of $H_1(M_n ; \Z)$. 
Reducing the matrix by the elementary operations on presentation matrices of modules 
(for example see \cite[Lemma 7.2.1]{KawauchiSurvey}), 
we see that the previous matrix is equivalent to 
\[\begin{pmatrix} 
n^3 - n^2 + n -1 & 0
\end{pmatrix}. \] 
Thus we have $H_1(M_n ; \Z) \cong \Z \oplus \Z_{n^3 - n^2 + n -1}$. 
\end{proof}

\begin{figure}[!htb]
\centering
\begin{overpic}[width=.4\textwidth]{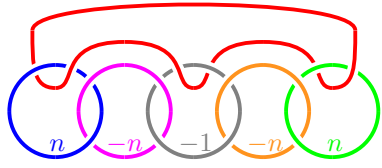}
\put(12,3){$\Blue{n}$} 
\put(27,3){$\Magenta{-n}$} 
\put(46,3){$\Gray{-1}$} 
\put(64,3){$\Orange{-n}$} 
\put(85,3){$\Green{n}$} 
\end{overpic}
\caption{A surgery description of $M_n$}
\label{fig_Mn}
\end{figure}
\begin{figure}[!htb]
\centering
\begin{overpic}[width=.4\textwidth]{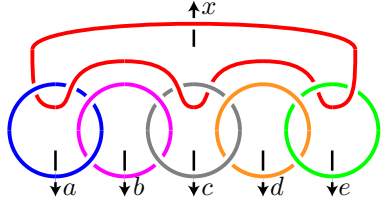}
\put(52,48){$x$}
\put(15.5,.5){$a$}
\put(34,.5){$b$}
\put(52,.5){$c$}
\put(70,.5){$d$}
\put(88,.5){$e$}
\end{overpic}
\caption{Generators of $H_1(X;\Z)$.}
\label{fig_generator}
\end{figure}

\begin{remark} 
By the classification of lens spaces, for example see \cite[Theorem D.1.2]{KawauchiSurvey}, 
$\Sigma_n$ is chiral (i.e.\ not achiral) since we have 
\[ (n^3 - 2n^2 + n -1)^2 \equiv 1 \not\equiv -1  \mod (n^4 - 2n^3 + 2n^2 - 2n +1)\, . \]  
%Therefore $M_n$ is not homeomorphic to any null-homologous knot complement. 
\end{remark}

%In this section, we observe that the homology group of $M_n$ guarantees that 
%$M_n$ is not homeomorphic to any null-homologous knot complement 
%in any closed oriented 3-manifold. 
\begin{remark} 
For any null-homologous knot $J$ in an oriented closed 3-manifold $Y$, 
we have 
$H_1(Y \setminus J ; \Z) \cong \Z \oplus H_1(Y ; \Z)$. 
Each of our 3-manifolds $M_n$ is the exterior of a knot, 
say $J_n$, in the lens space $\Sigma_n$. 
By Lemma~\ref{lem:homology}, 
one can also see that $J_n$ is not null-homologous in $\Sigma_n$ 
since $H_1(\Sigma_n ; \Z) = \Z_{n^4 - 2n^3 + 2n^2 - 2n +1}$. 
In the case where $n = 2$, 
we have $n^4 - 2n^3 + 2n^2 - 2n +1 = n^3 - n^2 + n -1 = 5$. 
This case corresponds to the chirally cosmetic fillings on the figure-eight sibling 
introduced in Section~\ref{sec:intro}. 
Thus, the knot $J_2$ is also not null-homologous in $\Sigma_2$. 
\end{remark}

\begin{remark} 
As shown in Figure~\ref{fig_HandleSlide}, 
we can see directly that the $0$-surgery on the red component in Figure~\ref{fig_Mn} 
changes $\Sigma_{n}$ to $-\Sigma_{n}$. 
\end{remark}

\begin{figure}[!htb]
\centering
\begin{overpic}[width=\textwidth]{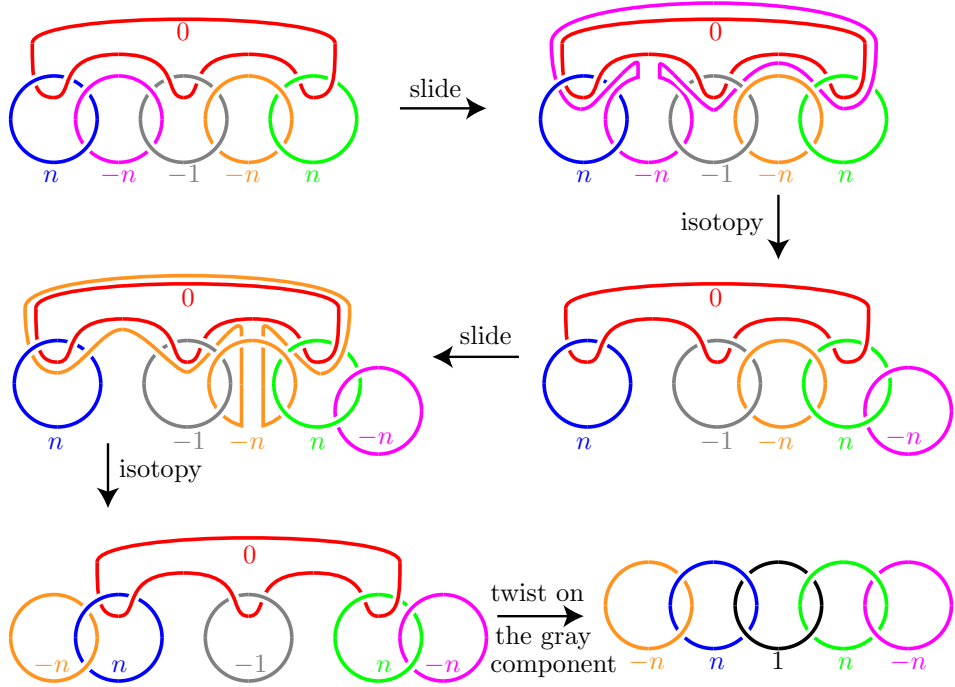}
\put(18,68){$\Red{0}$} 
\put(4,53){$\Blue{n}$} 
\put(10,53){$\Magenta{-n}$} 
\put(17,53){$\Gray{-1}$} 
\put(23,53){$\Orange{-n}$} 
\put(31.5,53){$\Green{n}$} 
\put(42.5,62){slide} 
\put(74,68){$\Red{0}$} 
\put(60,53){$\Blue{n}$} 
\put(66,53){$\Magenta{-n}$} 
\put(73,53){$\Gray{-1}$} 
\put(79,53){$\Orange{-n}$} 
\put(87.5,53){$\Green{n}$} 
\put(71,48){isotopy} 
\put(74,40){$\Red{0}$} 
\put(60,25){$\Blue{n}$} 
\put(92.5,25.5){$\Magenta{-n}$} 
\put(73,25){$\Gray{-1}$} 
\put(79,25){$\Orange{-n}$} 
\put(87.5,25){$\Green{n}$} 
\put(47.8,36){slide} 
\put(18.5,40){$\Red{0}$} 
\put(4.5,25){$\Blue{n}$} 
\put(37,25.5){$\Magenta{-n}$} 
\put(17.5,25){$\Gray{-1}$} 
\put(23.5,25){$\Orange{-n}$} 
\put(32,25){$\Green{n}$} 
\put(12,22){isotopy} 
\put(25,13){$\Red{0}$} 
\put(11.2,1.5){$\Blue{n}$} 
\put(44,1.5){$\Magenta{-n}$} 
\put(24,1.5){$\Gray{-1}$} 
\put(3,1.5){$\Orange{-n}$} 
\put(39,1.5){$\Green{n}$} 
\put(51,9){twist on} 
\put(51.5,4.5){the gray} 
\put(51,1.5){component} 
\put(65.5,1.8){$\Orange{-n}$} 
\put(74,1.8){$\Blue{n}$} 
\put(80.5,1.8){$1$} 
\put(87.5,1.8) {$\Green{n}$} 
\put(93,1.8){$\Magenta{-n}$}
\end{overpic}
\caption{The $0$-surgery changes $\Sigma_{n}$ to $-\Sigma_{n}$. }
\label{fig_HandleSlide}
\end{figure}

%Since $M_n$ admits an orientation-reversing self-homeomorphism which sends 
%the slope $1/0$ to $0/1$, as discussed in Section~\ref{sec:intro}, 
%$M_n$ is not homeomorphic to any amphicheiral null-homologous knot complement 
%in a closed $3$-manifold. 
%Thus we have the following. 

As in the proof of Theorem~\ref{thm:main}, 
$M_n$ admits chirally cosmetic Dehn fillings along distance one slopes. 
Thus, we have the following. 

\begin{corollary}\label{cor:main}
There exist infinitely many achiral 1-cusped hyperbolic 3-manifolds each of which admits chirally cosmetic Dehn fillings along distance one slopes. 
\end{corollary}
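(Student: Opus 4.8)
The plan is to read the corollary off directly from Theorem~\ref{thm:main} together with the construction of the manifolds $M_n$. Theorem~\ref{thm:main} already supplies, for each integer $n \neq 0,1$, an achiral $1$-cusped hyperbolic $3$-manifold $M_n$, and its proof exhibits $M_n$ as the exterior of a knot $J_n$ in the lens space $\Sigma_n$. Hence the only two things left to establish are that each $M_n$ genuinely admits chirally cosmetic Dehn fillings along a pair of distance-one slopes, and that the family $\{M_n\}$ contains infinitely many pairwise non-homeomorphic members. Neither requires ideas beyond what has already been assembled, so I expect the argument to be short.

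First I would pin down the two fillings using the orientation-reversing self-homeomorphism $h$ of $M_n$ produced in the proof of Proposition~\ref{prop:ampmfd}. There we found two slopes $\widetilde\mu$ and $\widetilde\lambda$ on $\partial M_n$, arising as the preimages of the loops $\mu$ and $\lambda$ of Figure~\ref{fig_Tn}, whose distance is exactly one and which satisfy $h(\widetilde\mu) = \widetilde\lambda$; in particular $\widetilde\mu \neq \widetilde\lambda$. As in the proof of Theorem~\ref{thm:main}, the Dehn filling of $M_n$ along $\widetilde\mu$ recovers $\Sigma_n$ and the filling along $\widetilde\lambda$ recovers $-\Sigma_n$. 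Since $h(\widetilde\mu)=\widetilde\lambda$, the homeomorphism $h$ carries the solid torus glued along $\widetilde\mu$ to the solid torus glued along $\widetilde\lambda$, so it extends to an orientation-reversing homeomorphism between the $\widetilde\mu$-filling and the $\widetilde\lambda$-filling of $M_n$. Thus these two fillings, performed along the distance-one slopes $\widetilde\mu$ and $\widetilde\lambda$, are chirally cosmetic.

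Next I would extract an infinite subfamily of distinct homeomorphism types using Lemma~\ref{lem:homology}. Since $H_1(M_n;\Z) \cong \Z \oplus \Z_{n^3 - n^2 + n - 1}$, the order of the torsion subgroup of $H_1(M_n;\Z)$ equals $|n^3 - n^2 + n - 1|$, which grows without bound as $|n| \to \infty$ and therefore assumes infinitely many distinct values over the admissible integers $n \neq 0,1$. Selecting integers $n$ realizing infinitely many distinct torsion orders yields infinitely many of the $M_n$ that are pairwise non-homeomorphic, and each is achiral, $1$-cusped, and hyperbolic by Theorem~\ref{thm:main}. Combined with the previous paragraph, this produces the desired infinite family and proves the corollary. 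The step demanding the most care is the identification in the middle paragraph: the fact that the two fillings are \emph{chirally} cosmetic rests entirely on the identity $h(\widetilde\mu)=\widetilde\lambda$ and the distance-one computation from Proposition~\ref{prop:ampmfd}, so the principal obstacle is careful bookkeeping of the slopes rather than any new mathematics, the substantive input (hyperbolicity via Matignon and the homology computation) having already been carried out.
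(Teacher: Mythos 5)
Your proposal is correct and follows essentially the same route as the paper: the corollary is read off from Theorem~\ref{thm:main}, with the distance-one slopes $\widetilde\mu$ and $\widetilde\lambda$ yielding $\Sigma_n$ and $-\Sigma_n$, and Lemma~\ref{lem:homology} supplying infinitely many homeomorphism types. Your certification of chiral cosmeticity by extending $h$ over the filling solid tori is a valid minor repackaging of the paper's argument, which instead reads it off from the chirally cosmetic banding taking $K_n$ to $K_n!$ via the Montesinos trick.
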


%\section{Calculation of homology}\label{sec:homology}

%Here we note that 
%calculating the homology group also guarantees that 
%$M_n$ is not homeomorphic to any null-homologous knot complement 
%in any closed oriented 3-manifold. 

\section{Realizing $M_n$ as an amphicheiral knot complement}\label{sec:amphi}

In this section, we show the following. 

\begin{proposition}\label{prop:realize}
Each of our 3-manifolds $M_n$ can be realized as 
the exterior of an amphicheiral knot in some achiral 3-manifold.  
\end{proposition}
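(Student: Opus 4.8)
The plan is to exploit the achirality of $M_n$ established in Proposition~\ref{prop:ampmfd} together with the slopes $\widetilde\mu,\widetilde\lambda$ on $\partial M_n$ that the orientation-reversing self-homeomorphism $h$ interchanges. Recall that $M_n$ is the exterior of a knot $J_n$ in the lens space $\Sigma_n$. The key idea is that to realize $M_n$ as a knot exterior in a \emph{closed} manifold, one must choose a filling slope $\delta$ on $\partial M_n$ to play the role of the meridian; the core of the solid torus glued in then becomes the knot, and $M_n = E(J)$ where $J$ is that core inside the closed manifold $M_n(\delta)$. The requirement that the resulting knot be amphicheiral in an \emph{achiral} closed manifold is exactly the condition that $h$ extend over the filling, which forces $h$ to preserve the meridian slope $\delta$ up to sign. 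Since $h$ sends $\widetilde\mu\mapsto\widetilde\lambda$ and (being an involution-type symmetry coming from the $\pi/2$-rotation composed with the mirror) sends $\widetilde\lambda$ back to $\widetilde\mu$ up to orientation, the natural candidate for the meridian is a slope fixed by this swap, namely $\delta = \widetilde\mu + \widetilde\lambda$ (or $\widetilde\mu - \widetilde\lambda$, whichever is genuinely $h$-invariant).

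First I would make the symmetry explicit on $\partial M_n$: writing $h_* $ on $H_1(\partial M_n;\Z)$ in the basis $\{\widetilde\mu,\widetilde\lambda\}$, I would check that $h_*$ is an order-two (up to sign) involution swapping the two basis slopes, so that its $\pm1$-eigenvectors give two distinguished slopes. One of these is preserved by $h$ orientation-preservingly and the other orientation-reversingly. I would designate the $h$-invariant primitive slope as the meridian $\delta$ of the knot to be constructed, and let $M = M_n(\delta)$ be the closed $3$-manifold obtained by Dehn filling $M_n$ along $\delta$. Because $\delta$ is $h$-invariant, $h$ extends to an orientation-reversing self-homeomorphism $\varphi$ of $M$, so $M$ is achiral; and $\varphi$ carries the core knot $J$ to itself (isotopically), so $J$ is amphicheiral with respect to $\varphi$ in the sense defined in the introduction.

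The remaining steps are to verify that $J$ is genuinely a knot with the prescribed exterior, i.e.\ that filling along $\delta$ produces a manifold in which the core of the filling torus is an honest knot whose exterior recovers $M_n$, and that the construction is compatible with orientations so that $\varphi(J)$ is isotopic to $J$ (not merely to its reverse in a way that fails amphicheirality). Since the distance between $\widetilde\mu$ and $\widetilde\lambda$ is one, the slope $\delta=\widetilde\mu\pm\widetilde\lambda$ has distance one from both, which is convenient: $\delta$ together with either $\widetilde\mu$ or $\widetilde\lambda$ forms a basis of $H_1(\partial M_n;\Z)$, guaranteeing that the filling core is primitive and that the longitude can be taken among the remaining slopes. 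I would also record that the resulting knot $J$ need not be null-homologous --- indeed the homology computation of Lemma~\ref{lem:homology} shows the relevant class is nontrivial --- so this is consistent with, and complementary to, Proposition~\ref{prop:ampmfd}, which only rules out realizations by \emph{null-homologous} amphicheiral knots.

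The main obstacle I anticipate is pinning down the precise action of $h$ on the peripheral torus with its orientations and confirming that some primitive slope is truly fixed by $h$ (orientation-preservingly), rather than fixed only up to reversal; this is where a sign could go wrong. Equivalently, the delicate point is matching the orientation-reversing character of $h$ against the orientation of the core knot so that the extended $\varphi$ fixes $J$ up to isotopy and the ambient $M$ is orientation-reversing-self-homeomorphic simultaneously. I would resolve this by computing $h_*$ concretely from the $\pi/2$-rotation $r$ and the mirror $m$ on the tangle level (tracking how $r\circ m$ acts on the loops $\mu,\lambda$ of Figure~\ref{fig_Tn} and lifting), which reduces the whole question to a short linear-algebra check on a $2\times2$ integer matrix.
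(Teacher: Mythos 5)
Your proposal matches the paper's proof in essence: the paper also observes that $h=\widetilde r\circ\widetilde m$ swaps $\widetilde\mu$ and $\widetilde\lambda$ and hence preserves the slopes $\pm1/1$ (i.e.\ $\widetilde\mu\pm\widetilde\lambda$, verified diagrammatically in Figure~\ref{fig_InvSlope}), performs the $\pm1/1$-Dehn filling, extends $h$ over the attached solid torus to get an achiral closed manifold $N_n^{\pm}$, and takes the core of the filling torus as the amphicheiral knot with exterior $M_n$. The only difference is presentational: you phrase the invariance of the filling slope as a $2\times2$ linear-algebra check on $h_*$, whereas the paper checks it directly on the tangle picture.
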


Note that such an amphicheiral knot is not null-homologous. 

\begin{proof}[Proof of Proposition~\ref{prop:realize}] 
As in Sections~\ref{sec:examples} and \ref{sec:hyp}, 
the meridian on $\partial M_n$ is $\widetilde \mu$, 
and we can choose the longitude on $\partial M_n$ as $\widetilde \lambda$ 
corresponding to $0/1$. 
With respect to this meridian-longitude system, 
we see that the two slopes corresponding to $1/1$ and $-1/1$ are invariant 
via the orientation-reversing self-homeomorphism $h = \widetilde r \circ \widetilde m$ on $M_n$, 
see Figure~\ref{fig_InvSlope}. 

Let $N_{n}^{\pm}$ be the closed oriented 3-manifold 
obtained from $M_{n}$ by $\pm 1/1$-Dehn filling respectively. 
Since the core curve $c$ of the attached solid torus $V$ 
intersects a meridian disk of $V$ once, 
$h$ extends to $V$, and thus, $h$ extends to $N^{\pm}_{n}$. 
This implies that $N^{\pm}_{n}$ is achiral. 
Further, for the knot $k$ in $N^{\pm}_{n}$, which corresponds to the core $c$, 
$k$ is an amphicheiral knot in $N^{\pm}_{n}$ and 
the exterior of $k$ is homeomorphic to $M_{n}$. 
\end{proof}

\begin{figure}[!htb]
\centering
\begin{overpic}[width=\textwidth]{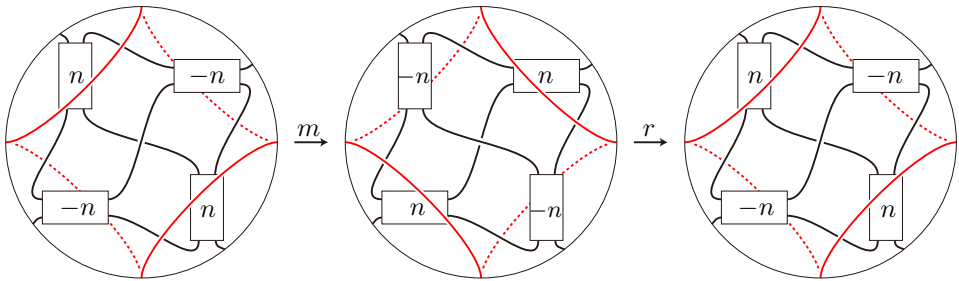}
\put(6.7,20.5){$n$} 
\put(5.7,6.7){$-n$} 
\put(20.5,6.5){$n$} 
\put(19.3,20.5){$-n$} 
\put(41.1,20.5){{\small $-n$}} 
\put(42.5,6.7){$n$} 
\put(55,6.5){{\small $-n$}} 
\put(56,20.5){$n$} 
\put(78,20.5){$n$} 
\put(77,6.7){$-n$} 
\put(92,6.5){$n$} 
\put(90.5,20.5){$-n$} 
\put(30.7,15){$m$} 
\put(67,15){$r$} 
\end{overpic}
\caption{The red loop corresponding to the slope $1/1$ on $\partial T_{n}$ is 
invariant via $r \circ m$.} 
\label{fig_InvSlope}
\end{figure}

\subsection*{Acknowledgement}
The authors would like to thank Professor Kimihiko Motegi and Professor Kai Ishihara 
for useful comments on Section~\ref{sec:amphi}.

\end{document}